\numberwithin{equation}{section}
\newtheorem{theorem}{Theorem}[section]
\newtheorem{lemma}[theorem]{Lemma}
\theoremstyle{definition}
\newtheorem{example}[theorem]{Example}
\newtheorem{remark}[theorem]{Remark}
\newtheorem{question}[theorem]{Question}
\def\R{\mathbb R}
\def\C{\mathbb C}
\def\D{\mathbb D}
\def\H{\mathbb H}
\def\P{\mathbb P}
\def\Cnn{\C_{n\times n}}
\def\diag{{\mbox{diag}\,}}
\def\Diag{{\mbox{Diag}\,}}
\begin{document}

\title[Determinantal inequality]{Weak log majorization and determinantal inequalities}

\author[Tin-Yau Tam]{Tin-Yau Tam$^1$}
\address{$^1$ Department of Mathematics and Statistics,  Auburn University, Auburn, AL 36849,  USA}
\email{tamtiny@auburn.edu}

\author[Pingping Zhang]{Pingping Zhang$^2$$^*$}
\address{$^2$ School of Science\\
Chongqing University of Posts and Telecommunications\\
Chongqing, 400065\\
China
}
\email{zhpp04010248@126.com}

\thanks{$^*$Corresponding author.}



\keywords{Positive definite matrices, determinant, weak log majorization, weak majorization}
\subjclass[2010]{15A45}

\begin{abstract}Denote by $\P_n$ the set of $n\times n$ positive definite matrices.
Let $D = D_1\oplus \dots \oplus D_k$, where $D_1\in \P_{n_1}, \dots, D_k \in \P_{n_k}$ with $n_1+\cdots + n_k=n$. Partition $C\in \P_n$ according to $(n_1, \dots, n_k)$ so that $\Diag C = C_1\oplus \dots \oplus C_k$. We prove the following weak log majorization result:
\begin{equation*}
\lambda (C^{-1}_1D_1\oplus \cdots \oplus C^{-1}_kD_k)\prec_{w \,\log}
\lambda(C^{-1}D),
\end{equation*}
where $\lambda(A)$ denotes the vector of eigenvalues of $A\in \Cnn$.
The inequality does not hold if one replaces the vectors of   eigenvalues  by the vectors of singular values, i.e.,
\begin{equation*}
s(C^{-1}_1D_1\oplus \cdots \oplus C^{-1}_kD_k)\prec_{w \,\log}
s(C^{-1}D)
\end{equation*}
is not true. As an application,
we provide a generalization of  a determinantal inequality of Matic \cite[Theorem 1.1]{M}. In addition, we obtain a weak  majorization result which is complementary to a determinantal inequality of Choi \cite[Theorem 2]{C} and give a weak log majorization open question.

\end{abstract}
\maketitle
\section{Introduction}

Denote by $\Cnn$ the set of $n\times n$ complex  matrices and $\P_n\subset \Cnn$ the set of $n\times n$ positive definite matrices.
For $A\in \Cnn$, we denote by $A^{*}$ and $|A|=(A^{*}A)^{\frac{1}{2}}$ the conjugate transpose and the positive semidefinite part of $A$,  respectively.  Given $n\times n$  Hermitian matrices $A$ and $B$,  $A\leq B$ means that   $B-A$  is positive semidefinite.

For $x = (x_1, x_2, \dots, x_n)$, $y = (y_1, y_2, \dots, y_n)\in\R^n$, let $x^{\downarrow} = (x_{[1]}, x_{[2]}, \dots, x_{[n]})$ denote a rearrangement of the components of $x$ such that $x_{[1]} \ge  x_{[2]} \ge \cdots \ge x_{[n]}$. The notation $x\leq y$ means that $x_{[i]}\leq y_{[i]},\,\,i=1,\ldots, n.$  We say that $x$ is weakly majorized by $y$, denoted by $x\prec_{w}y$, if $\sum_{j=1}^{k}x_{[j]}\leq \sum_{j=1}^{k}y_{[j]}$ for all $1\leq k\leq n$. We say that $x$ is majorized by $y$, denoted by $x\prec y$, if  $x\prec_w y$ and $\sum_{j=1}^{n}x_{j}=\sum_{j=1}^{n}y_{j}$.

Let $\R_+$ denote the set of all positive real numbers and $\R^n_+ = (\R_+)^n$. Given $x, y\in (\R_+)^n$, we say that $x$ is weakly log-majorized by $y$, written as $x\prec_{w\,\log}y$, if $\prod_{i=1}^{k}x_{[i]}\leq \prod_{i=1}^{k}y_{[i]}$, for $k=1,\ldots, n$;  $x$ is log-majorized by $y$, denoted by $x\prec_{\log}y$, if $x\prec_{w\,\log}y$ and  $\prod_{j=1}^{n}x_{j}=\prod_{j=1}^{n}y_{j}$.

Let $A\in \P_n$. Denote by $\lambda(A) = (\lambda_1(A), \dots, \lambda_n(A))\in \R_+^n$  the vector of eigenvalues of $A$ and we may arrange the eigenvalues in non-increasing order $\lambda_1(A) \ge \cdots \ge \lambda_n(A)$.

Matic \cite[Theorem 1.1]{M} proved the following determinantal inequality. Zhang \cite{Z} and Choi \cite {C} gave two different proofs, respectively. We state the theorem using Choi's version.

\begin{theorem}\rm (Matic \cite{M}) \label{Matic}
Let $C\in \P_n$ and $D = D_1\oplus \dots \oplus D_k$, where $D_1\in \P_{n_1}, \dots, D_k \in \P_{n_k}$ with $n_1+\cdots + n_k=n$.  Partition $C$ according to $(n_1, \dots, n_k)$ so that $\Diag C = C_1\oplus \dots \oplus C_k$ in which $\Diag C$ is the main block diagonal of $C$. Then
\begin{equation}\label{MC}
 \det (I_{n_1}+ C_1^{-1}D_1) \cdots  \det (I_{n_k}+ C_k^{-1}D_k)\leq \det (I_n+ C^{-1}D).
\end{equation}
\end{theorem}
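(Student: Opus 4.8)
The plan is to argue by induction on the number $k$ of diagonal blocks, the crux being the case $k=2$. For the inductive step, write $C=\begin{pmatrix}C_1 & B\\ B^{*} & C'\end{pmatrix}$, where $C'\in\P_{n-n_1}$ is the trailing principal block; its own block-diagonal part is $C_2\oplus\cdots\oplus C_k$, and $D':=D_2\oplus\cdots\oplus D_k$ is block diagonal. Granting the two-block inequality, $\det(I_{n_1}+C_1^{-1}D_1)\,\det(I_{n-n_1}+C'^{-1}D')\le\det(I_n+C^{-1}D)$, while the induction hypothesis applied to $C'$ and $D'$ gives $\prod_{i=2}^{k}\det(I_{n_i}+C_i^{-1}D_i)\le\det(I_{n-n_1}+C'^{-1}D')$; multiplying the two yields \eqref{MC}. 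So everything rests on the case $k=2$.

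For $k=2$, with $C=\begin{pmatrix}C_1 & B\\ B^{*} & C_2\end{pmatrix}$ and $D=D_1\oplus D_2$, put $X:=C_2$, $E:=D_2$, $S:=B^{*}C_1^{-1}B\ge 0$, $T:=B^{*}(C_1+D_1)^{-1}B\ge 0$. Using the identities $\det(I+P^{-1}Q)=\det(P+Q)/\det P$ and $\det\begin{pmatrix}P & Q\\ Q^{*} & R\end{pmatrix}=\det P\cdot\det(R-Q^{*}P^{-1}Q)$ (legitimate since the blocks in sight are positive definite), one computes $\det(I_n+C^{-1}D)=\dfrac{\det(C_1+D_1)\det(X+E-T)}{\det C_1\det(X-S)}$, while the left side of \eqref{MC} equals $\dfrac{\det(C_1+D_1)\det(X+E)}{\det C_1\det X}$. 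Cancelling the common positive factor $\det(C_1+D_1)/\det C_1$, \eqref{MC} is equivalent to
\[
\frac{\det(X+E)}{\det X}\ \le\ \frac{\det(X+E-T)}{\det(X-S)}.
\]
Here $X-S>0$, being the Schur complement of $C_1$ in $C$, and $0\le T\le S$ since $C_1+D_1\ge C_1$ forces $(C_1+D_1)^{-1}\le C_1^{-1}$; hence $X+E-T\ge X+E-S\ge X-S>0$, so every matrix above is positive definite.

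To finish, rewrite the target as $\det(X+E)\det(X-S)\le\det X\cdot\det(X+E-T)$. Since $T\le S$ forces $\det(X+E-T)\ge\det(X+E-S)$, it is enough to show $\det(X+E)\det(X-S)\le\det X\cdot\det(X+E-S)$, i.e.\ that $Z\mapsto\det(Z-S)/\det Z=\det\!\big(I-Z^{-1/2}SZ^{-1/2}\big)$ does not decrease as $Z$ moves from $X$ up to $X+E$. This monotonicity is the one genuinely analytic point, and it follows from antitonicity of inversion: for $0<Z_1\le Z_2$ one has $S^{1/2}Z_1^{-1}S^{1/2}\ge S^{1/2}Z_2^{-1}S^{1/2}$, whence $\lambda_i(Z_1^{-1/2}SZ_1^{-1/2})=\lambda_i(S^{1/2}Z_1^{-1}S^{1/2})\ge\lambda_i(S^{1/2}Z_2^{-1}S^{1/2})=\lambda_i(Z_2^{-1/2}SZ_2^{-1/2})$ for every $i$; writing these numbers as $a_i\ge b_i$ (each in $[0,1)$ because $Z_j-S>0$) gives $\prod_i(1-a_i)\le\prod_i(1-b_i)$, which is exactly what is needed. (Equivalently, $\frac{d}{dt}\big[\log\det(X+tE-S)-\log\det(X+tE)\big]=\tr\!\big[E\big((X+tE-S)^{-1}-(X+tE)^{-1}\big)\big]\ge 0$.) The main obstacle I anticipate is organizational rather than deep: arranging the Schur-complement expansion so that all first-block data cancels and one is left with precisely this monotonicity statement, while tracking the positivity that makes the determinant identities and the inverses legitimate. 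I note in passing that \eqref{MC} also drops out of the weak log majorization announced in the abstract, via monotonicity and convexity of $t\mapsto\log(1+e^{t})$, but the argument above is self-contained.
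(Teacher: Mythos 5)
Your proof is correct, but it takes a genuinely different route from the paper's. The paper does not prove Theorem \ref{Matic} by Schur complements at all: it first establishes the weak log majorization $\lambda(C_1^{-1}D_1\oplus\cdots\oplus C_k^{-1}D_k)\prec_{w\,\log}\lambda(C^{-1}D)$ (Theorem \ref{main}), whose proof reduces to $D=I_n$ by the similarity $C^{-1}D\sim (D^{-1/2}CD^{-1/2})^{-1}$, invokes Ky Fan's majorization $\lambda(C_1\oplus\cdots\oplus C_k)\prec\lambda(C)$, pushes it through the convex function $x\mapsto x^{-p}$, and passes to the limit $p\to 0^+$ via power means; inequality \eqref{MC} then falls out by applying the increasing convex function $\log(1+e^{x})$ (the $p=1$ case of Theorem \ref{Zhang}) --- exactly the shortcut you mention in passing at the end. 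Your argument instead is the classical direct one (in the spirit of Matic's and Choi's original proofs): reduce to $k=2$ by induction, expand both determinants by Schur complements so that the first-block data cancels, and reduce everything to the Loewner-monotonicity statement $\det(X+E)\det(X-S)\le\det X\cdot\det(X+E-S)$ together with $T\le S$. All the steps check out: the positivity of $X-S$ as a Schur complement, the antitonicity $(C_1+D_1)^{-1}\le C_1^{-1}$ giving $T\le S$, and the eigenvalue comparison $\prod_i(1-a_i)\le\prod_i(1-b_i)$ are each legitimate, and the induction step is sound because $D'=D_2\oplus\cdots\oplus D_k$ is again positive definite and block diagonal. What each approach buys: yours is elementary and self-contained, needing only $2\times 2$ block determinant identities and operator monotonicity of inversion; the paper's is heavier but strictly stronger, since the weak log majorization \eqref{weak log} controls all partial products of eigenvalues and hence yields the generalization $\det(I+(C^{-1}D)^p)$ for all $p\ge 0$ in Theorem \ref{Zhang}, which your determinant-specific cancellation does not obviously give.
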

In this paper we obtain the following weak log majorization result where $C$ and $D$ are given as in Theorem \ref{Matic}:
\begin{equation}\label{TP}
\lambda (C^{-1}_1D_1\oplus \cdots \oplus C^{-1}_kD_k)\prec_{w \,\log}
\lambda(C^{-1}D).
\end{equation}
We note that \eqref{TP} does not hold if one replaces the vectors of eigenvalues  by the vectors of singular values.

We would like to  point out that \eqref{TP} is  more fundamental than \eqref{MC}. In order to see that we will first derive the following determinantal inequality as an application of \eqref{TP}:
\begin{equation}\label{P}
 \det (I_{n_1}+ (C_1^{-1}D_1)^{p}) \cdots  \det (I_{n_k}+ (C_k^{-1}D_k)^{p})\leq \det (I_n+ (C^{-1}D)^{p}), \quad p\ge 0,
\end{equation}
which is evidently a generalization of  \eqref{MC}.
We will show by example that \eqref{P} is not true when $p<0$.

By looking at \eqref{P} as a generalization of  \eqref{Matic}, one might ask whether the  following two possible generalizations of \eqref{Matic} are true or not:
\begin{equation}\label{false1}
\det (I_{n_1}+ |C_1^{-1}D_1|^{p}) \cdots  \det (I_{n_k}+ |C_k^{-1}D_k|^{p})\leq\det (I_n+ |C^{-1}D|^{p}), \quad   p\ge 0,
\end{equation}
and
\begin{equation}\label{false2}
 \det (I_{n_1}+ C_1^{-p}D_1^{p}) \cdots  \det (I_{n_k}+ C_k^{-p}D_k^{p})\leq\det (I_n+ C^{-p}D^{p}), \quad  p\ge 0.
\end{equation}
Both answers are negative and we will provide a counterexample for both inequalities.

Choi \cite[Theorem 2]{C} obtained the following determinantal inequality:
\begin{theorem}\rm (Choi \cite{C})\label{Cthm} Let $A_{i}\in \P_{n},\,i=1,\dots, m$, and $\Diag A_i = A_i^{(1)} \oplus \cdots \oplus A_i^{(k)}$,
where  $A_{i}^{(j)}\in \P_{n_j}$ for $i=1, \dots, m$, $j=1,\ldots, k$.
\begin{equation}\label{Choi}
\det\left(\sum_{i=1}^{m}(A_{i}^{(1)})^{-1}\right)\cdots\det\left(\sum_{i=1}^{m}(A_{i}^{(k)})^{-1}\right)\leq\det\left(\sum_{i=1}^{m}A_{i}^{-1}\right).
\end{equation}
\end{theorem}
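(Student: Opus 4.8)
The plan is to derive \eqref{Choi} from Fischer's inequality together with a variational property of the parallel sum. For $A_1,\dots,A_m\in\P_n$ write $\mathcal P(A_1,\dots,A_m):=\bigl(\sum_{i=1}^m A_i^{-1}\bigr)^{-1}\in\P_n$, so that \eqref{Choi} is precisely the assertion that $\det\mathcal P(A_1,\dots,A_m)\le\prod_{j=1}^k\det\mathcal P\bigl(A_1^{(j)},\dots,A_m^{(j)}\bigr)$. The first ingredient is the infimal convolution description of $\mathcal P$: for every $x\in\C^n$,
\begin{equation*}
\langle \mathcal P(A_1,\dots,A_m)x,x\rangle=\min\Bigl\{\textstyle\sum_{i=1}^m\langle A_iu_i,u_i\rangle:\ u_1,\dots,u_m\in\C^n,\ \sum_{i=1}^m u_i=x\Bigr\},
\end{equation*}
which I would obtain from a short Lagrange-multiplier computation (the minimizer is $u_i=A_i^{-1}w$ with $w=\mathcal P(A_1,\dots,A_m)x$).

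The second ingredient is that compression to a diagonal block can only shrink the parallel sum. Let $E_j\colon\C^{n_j}\hookrightarrow\C^n$ be the inclusion onto the $j$-th block, so that $E_j^*A_iE_j=A_i^{(j)}$ and $E_j^*\mathcal P(A_1,\dots,A_m)E_j$ is the $j$-th diagonal block $[\mathcal P(A_1,\dots,A_m)]^{(j)}$. Given any $v_1,\dots,v_m\in\C^{n_j}$ with $\sum_i v_i=y$, the vectors $u_i:=E_jv_i$ satisfy $\sum_i u_i=E_jy$ and $\sum_i\langle A_iu_i,u_i\rangle=\sum_i\langle A_i^{(j)}v_i,v_i\rangle$; since the minimum defining $\langle[\mathcal P(A_1,\dots,A_m)]^{(j)}y,y\rangle$ is taken over a larger family of decompositions than the one defining $\langle\mathcal P(A_1^{(j)},\dots,A_m^{(j)})y,y\rangle$, this yields the operator inequality $[\mathcal P(A_1,\dots,A_m)]^{(j)}\le\mathcal P(A_1^{(j)},\dots,A_m^{(j)})$ for each $j$.

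To finish, set $\Sigma:=\mathcal P(A_1,\dots,A_m)$. Fischer's inequality gives $\det\Sigma\le\prod_{j=1}^k\det\Sigma^{(j)}$, while monotonicity of the determinant together with the compression inequality gives $\det\Sigma^{(j)}\le\det\mathcal P(A_1^{(j)},\dots,A_m^{(j)})$; multiplying over $j$ and substituting $\det\mathcal P(\cdot)=1/\det\bigl(\sum_i(\cdot)^{-1}\bigr)$ produces exactly \eqref{Choi}. I expect the compression step to be the only delicate point: one must use the variational form of $\mathcal P$ rather than the algebraic one, both to see that enlarging the block can only lower the value and to land the inequality in the right direction. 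By contrast, the naive routes — applying Fischer directly to $\sum_iA_i^{-1}$, or inducting on $m$ by peeling off one summand and invoking Matic-type estimates — produce Fischer and parallel-sum inequalities that point the wrong way, so the parallel-sum variational principle seems essential.
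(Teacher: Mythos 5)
Your proof is correct. Note that the paper does not actually prove this theorem---it is quoted from Choi \cite{C} and used as a black box---so there is no in-paper argument to compare against; what you have written is a valid self-contained proof. The three ingredients all check out. (i) The variational formula $\langle \mathcal P(A_1,\dots,A_m)x,x\rangle=\min\{\sum_i\langle A_iu_i,u_i\rangle:\sum_iu_i=x\}$ is the standard infimal-convolution description of the $m$-fold parallel sum; since each $A_i$ is positive definite the objective is strictly convex on the affine constraint set, so the stationary point $u_i=A_i^{-1}w$ with $w=(\sum_iA_i^{-1})^{-1}x$ is the global minimizer and the minimum value is $\langle w,x\rangle=\langle\mathcal Px,x\rangle$. (ii) The compression step is the crux and you have the inequality pointing the right way: the decompositions of $E_jy$ supported in the $j$-th block form a subfamily of all decompositions in $\C^n$, so the unrestricted minimum is no larger, giving $[\mathcal P(A_1,\dots,A_m)]^{(j)}\le\mathcal P(A_1^{(j)},\dots,A_m^{(j)})$; this is genuinely stronger than what one gets by applying the paper's Lemma \ref{lem} termwise, which produces bounds in the unusable direction. (iii) Fischer's inequality $\det\Sigma\le\prod_j\det\Sigma^{(j)}$ together with monotonicity of the determinant on the positive definite cone (via Weyl monotonicity of eigenvalues) and the identity $\det\mathcal P=1/\det(\sum_iA_i^{-1})$ then yields exactly \eqref{Choi} upon taking reciprocals. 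Your closing observation is also apt: applying Fischer directly to $\sum_iA_i^{-1}$ gives an inequality in the wrong direction, so passing to the inverse $\mathcal P$ before invoking Fischer is what makes the argument work.
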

We present a weak  majorization inequality which is complementary to \eqref{Choi} and pose a weak log majorization open problem.

\section{Weak log majorization and generalization of Matic's determinantal inequality}

Regarding \eqref{P}, one may ask whether $(C^{-1}D)^{p}$ is well defined or not when $p\in \R$. The question makes sense  as we know that  $X^p$ is not necessarily defined for a general matrix $X\in \Cnn$. However, we can define $X^p$
when $X$ is hyperbolic, i.e., $X$ is diagonalizable with positive eigenvalues.
Let $\H_n$ be the set of all hyperbolic matrices in $\Cnn$  and $\D_n$ be the set of all diagonalizable matrices in $\Cnn$ with real eigenvalues.
We claim that the exponential map $\exp: \D_n\to \H_n$ is bijective. It is surjective since each $X\in \H_n$ can be written as $X  = SDS^{-1}$ for some nonsingular $S$, where $D = \diag (e^{\lambda_1}, \dots, e^{\lambda_n})$ and $\lambda_1, \dots, \lambda_n\in \R$. Set $Y:=S(\log D)S^{-1}$,
where $\log D: =  \diag (\lambda_1, \dots, \lambda_n)$. Note that
$$e^Y= e^{S(\log D)S^{-1}} = S(e^{\log D})S^{-1}= SDS^{-1}= X.$$ We are going to show that $\exp$ is injective.
Let $Y_1, Y_2\in \D_n$  such that $e^{Y_1}=e^{Y_2}$. Let $P_1D_1P_1^{-1} = Y_1$ and $P_2D_2P_2^{-1} = Y_2$, where $D_1$ and $D_2$ are diagonal with diagonal entries arranged in non-increasing order. Taking exponentials of both sides yields $P_1e^{D_1}P_1^{-1} = P_2e^{D_2}P_2^{-1}$ and thus
$D_1=D_2=D$, say, by spectrum consideration. Write $D:=d_1I_{n_1}\oplus \cdots \oplus d_kI_{n_k}$, where $d_1 > \cdots >  d_k$.  Set $Q = P_1^{-1}P_2$. So
$e^DQ = Qe^D$ and hence $Q = Q_1\oplus \cdots \oplus Q_k$, where $Q_i\in \C_{n_i\times n_i}$. Thus $DQ=QD$, i.e.,
$DP_1^{-1}P_2 =  P_1^{-1}P_2D$, which implies that $Y_1=Y_2$. So we conclude that the map $\exp: \D_n\to \H_n$ is bijective. Now given $X\in \H_n$,
define $X^p:=e^{pY}$, $p\in \R$, where $Y$ is the unique matrix in $\D_n$ such that $X=e^Y$. Explicitly,
if we write $X=S\diag(\xi_1, \dots, \xi_n)S^{-1}$ for some nonsingular $S\in \Cnn$, where $\xi_1, \dots, \xi_n\in \R_+$ are the eigenvalues of $X$ in any order, then
$X^p := S\diag (\xi_1^p, \dots, \xi_n^p)S^{-1}$ since $Y =S\diag (\log \xi_1, \dots, \log \xi_n)S^{-1}$.

Let us get back to the well-definedness of $ (C^{-1}D)^{p}$, $p\in \R$, regarding \eqref{P}. Note that $C^{-1}, D\in \P_n$.
Given $A, B\in \P_n$, the product $AB$ is hyperbolic though it may not be in $\P_n$. It is because that $AB$ is similar to $B^{1/2}ABB^{-1/2} = B^{1/2}AB^{1/2}\in \P_n$, which is unitarily similar to a positive diagonal matrix. Hence  $AB = P\diag(\gamma_1, \dots, \gamma_n)P^{-1}$, for some nonsingular $P\in \Cnn$, where $\gamma_1, \dots, \gamma_n\in \R_+$ so that $(AB)^p = P\diag(\gamma_1^p, \dots, \gamma_n^p)P^{-1}$, $p\in \R$.

Now we give our main result as follows.
\begin{theorem}\label{main}\rm
Under the conditions as in Theorem \ref{Matic}, we have
\begin{equation} \label{weak log}
\lambda (C^{-1}_1D_1\oplus \cdots \oplus C^{-1}_kD_k)\prec_{w \,\log}
\lambda(C^{-1}D).
\end{equation}
\end{theorem}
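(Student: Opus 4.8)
The plan is to reduce the weak log majorization to a statement about products of the largest eigenvalues, which is the form $\prod_{i=1}^m \lambda_i(C_1^{-1}D_1\oplus\cdots\oplus C_k^{-1}D_k)\le \prod_{i=1}^m\lambda_i(C^{-1}D)$ for each $m=1,\dots,n$. Since $C^{-1}D$ and each $C_j^{-1}D_j$ are hyperbolic (similar to positive definite matrices, as recalled in the excerpt), their eigenvalues are positive and these products make sense. The key observation is that a product of the $m$ largest eigenvalues of a hyperbolic matrix $X=AB$ with $A,B\in\P_n$ equals $\prod_{i=1}^m\lambda_i(B^{1/2}AB^{1/2})$, a genuinely positive definite object, so I can pass to the positive definite matrices $C^{-1/2}DC^{-1/2}$ on the right and $C_j^{-1/2}D_jC_j^{-1/2}$ on the left and use the variational (Ky Fan / antisymmetric tensor) characterization of products of top eigenvalues.

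The main steps, in order, would be: (1) Observe $\lambda(C^{-1}D)=\lambda(C^{-1/2}DC^{-1/2})$ and likewise $\lambda(C_j^{-1}D_j)=\lambda(C_j^{-1/2}D_jC_j^{-1/2})$, so \eqref{weak log} is equivalent to $\lambda\big(\bigoplus_j C_j^{-1/2}D_jC_j^{-1/2}\big)\prec_{w\,\log}\lambda(C^{-1/2}DC^{-1/2})$ with both sides now vectors of eigenvalues of positive definite matrices. (2) Fix $m$ and use the Ky Fan-type formula $\prod_{i=1}^m\lambda_i(P)=\max_{\dim V=m}\det\big(\,\Pi_V^* P\,\Pi_V\big)$ over $m$-dimensional subspaces $V$ (equivalently, the top eigenvalue of $\wedge^m P$), valid for $P\in\P_n$. (3) For the block diagonal matrix, the optimal subspace splits as a direct sum $V=V_1\oplus\cdots\oplus V_k$ with $\dim V_j=m_j$, $\sum m_j=m$, giving $\prod_{i=1}^m\lambda_i(\bigoplus_j C_j^{-1/2}D_jC_j^{-1/2})=\max_{\sum m_j=m}\prod_j\prod_{i=1}^{m_j}\lambda_i(C_j^{-1/2}D_jC_j^{-1/2})$. (4) For each fixed choice of the $m_j$, interpret $\prod_j\prod_{i=1}^{m_j}\lambda_i(C_j^{-1/2}D_jC_j^{-1/2})$ via Theorem \ref{Matic} (Matic's inequality) applied in the exterior-power / tensor setting, or more directly: reduce to the $k=2$, $m$ arbitrary case by induction and there invoke a Fischer-type / minor argument, bounding the block-diagonal product by $\prod_{i=1}^m\lambda_i(C^{-1/2}DC^{-1/2})$ using the fact that a product of principal minors of $C^{-1}D$ of the right orders is dominated by the corresponding compound-matrix quantity, which is exactly Matic's inequality applied to the compound (exterior power) matrices $\wedge^{m_1}(\cdot)\oplus\cdots$. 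The cleanest route is probably: apply Matic's Theorem \ref{Matic} with $C,D$ replaced by suitable positive definite matrices built from antisymmetric tensor powers so that \eqref{MC} becomes exactly the inequality $\prod_{i=1}^m\lambda_i(\bigoplus) \le \prod_{i=1}^m\lambda_i(C^{-1}D)$ after taking determinants; the block structure of $\wedge^m$ of a direct sum provides the needed partition.

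The hard part, I expect, will be step (4): correctly setting up the antisymmetric tensor power machinery so that Matic's determinantal inequality \eqref{MC} transfers to the product of the top $m$ eigenvalues. The subtlety is that $\wedge^m(X\oplus Y)$ is not simply $\wedge^m X\oplus \wedge^m Y$ but decomposes as $\bigoplus_{a+b=m}\wedge^a X\otimes\wedge^b Y$, so the block-diagonal structure needed to apply Theorem \ref{Matic} is present but the matching of $\Diag$ of the compound of $C$ against the compound of $\Diag C$ requires care — one must check that the principal submatrix of $\wedge^m C$ corresponding to index sets respecting the partition is exactly $\bigoplus_{m_1+\cdots+m_k=m}\wedge^{m_1}C_1\otimes\cdots\otimes\wedge^{m_k}C_k$, and that $\wedge^m(C^{-1}D)=(\wedge^m C)^{-1}(\wedge^m D)$ up to the hyperbolic-power conventions fixed in the excerpt. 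Once that dictionary is in place, \eqref{weak log} follows by applying \eqref{MC} to the compound matrices and taking determinants, since $\det(\wedge^m A)=\prod_{i=1}^m\lambda_i(A)$ up to the standard $\binom{n-1}{m-1}$ exponent that cancels on both sides.
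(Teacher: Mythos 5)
Your steps (1)--(3) are a sound reduction: \eqref{weak log} is equivalent to the family of per-composition inequalities
\[
\prod_{j=1}^{k}\prod_{i=1}^{m_j}\lambda_i\bigl(C_j^{-1}D_j\bigr)\ \le\ \prod_{i=1}^{m}\lambda_i\bigl(C^{-1}D\bigr),\qquad m_1+\cdots+m_k=m,
\]
and passing to $C^{-1/2}DC^{-1/2}$ is legitimate. The gap is in step (4), where both facts you propose to ``check'' are false. First, $\det(\wedge^m A)=(\det A)^{\binom{n-1}{m-1}}$, the product of \emph{all} eigenvalues to a power, not $\prod_{i=1}^m\lambda_i(A)$; the product of the top $m$ eigenvalues is the \emph{largest eigenvalue} of $\wedge^m A$, not its determinant, so ``applying \eqref{MC} to the compound matrices and taking determinants'' can only reproduce the $m=n$ case. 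Second, the principal submatrix of $\wedge^m C$ indexed by the $m$-subsets of a fixed profile $(m_1,\dots,m_k)$ is \emph{not} $\wedge^{m_1}C_1\otimes\cdots\otimes\wedge^{m_k}C_k$: its $(\alpha,\beta)$ entry is the minor $\det C[\alpha|\beta]$, which involves the off-diagonal blocks of $C$ even when $\alpha=\beta$ (indeed $\det C[\alpha|\alpha]\le\prod_j\det C_j[\alpha_j|\alpha_j]$ is Fischer's inequality, an inequality rather than an identity). Hence $\Diag(\wedge^m C)\ne\wedge^m(\Diag C)$, Theorem \ref{Matic} applied to $\wedge^m C$, $\wedge^m D$ does not produce the left-hand side you need, and the entire substance of the argument is missing.

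The per-composition inequality you are after is true and can be completed in your framework, but with different tools: use the variational characterization $\prod_{i=1}^m\lambda_i(C^{-1}D)=\max_V \det(V^*DV)/\det(V^*CV)$ over $n\times m$ matrices $V$ of rank $m$ (obtained from the Ky Fan characterization of $\prod_{i=1}^m\lambda_i(C^{-1/2}DC^{-1/2})$ by the substitution $V=C^{-1/2}W$), restrict to $V=V_1\oplus\cdots\oplus V_k$ with $V_j$ of size $n_j\times m_j$, note that $\det(V^*DV)=\prod_j\det(V_j^*D_jV_j)$ since $D$ is block diagonal, and bound $\det(V^*CV)\le\prod_j\det(V_j^*C_jV_j)$ by Fischer; maximizing over the $V_j$ finishes the job with no compound matrices and no appeal to Theorem \ref{Matic}. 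The paper's own proof is entirely different and more elementary: it first reduces to $D=I_n$ via the congruence $D^{-1/2}CD^{-1/2}$ (using that $D$ is block diagonal), invokes Ky Fan's majorization $\lambda(C_1\oplus\cdots\oplus C_k)\prec\lambda(C)$, applies the convex map $t\mapsto t^{-p}$ to obtain $\lambda(C_1^{-p}\oplus\cdots\oplus C_k^{-p})\prec_w\lambda(C^{-p})$ for every $p>0$, and lets $p\to0^+$ through power means to convert weak majorization into weak log majorization.
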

\begin{proof} 
It suffices to prove the case when $D=I_n$:
 \begin{eqnarray}\label{simple1}
 \lambda(C_{1}^{-1}\oplus\cdots\oplus C_{k}^{-1})\prec_{w\, \log}\lambda(C^{-1}).
 \end{eqnarray}
The reason is that
$$
\lambda(C^{-1}D) = \lambda( (D^{-1/2}CD^{-1/2})^{-1})
$$
 and
 $$\Diag(D^{-1/2}CD^{-1/2})=(D_{1}^{-1/2}C_{1}D_{1}^{-1/2})\oplus\cdots\oplus(D_{k}^{-1/2}C_{k}D_{k}^{-1/2}).$$
 Thus
 \eqref{weak log} is equivalent to
 $$
\lambda ((\Diag (D^{-1/2}CD^{-1/2}))^{-1})\prec_{w \,\log} \lambda ((D^{-1/2}CD^{-1/2})^{-1})
 $$
so it is sufficient to show \eqref{simple1}.
By a result of Ky Fan  \cite [p.308]{MOA}  we have
\begin{eqnarray}\label{majorization}
\lambda (C_1\oplus \dots \oplus C_k)\prec \lambda (C).
\end{eqnarray}
It is known that \cite[p.165]{MOA} for any convex function  $f:\R\to \R$, if $x\prec y$, where $x=(x_{1}, x_{2},\ldots,x_{n}), y=(y_{1}, y_{2},\ldots, y_{n})\in \mathbb{R}^{n}$, then
\begin{equation*}
(f(x_{1}), f(x_{2}), \ldots, f(x_{n}))\prec_{w} (f(y_{1}), f(y_{2}), \ldots, f(y_{n})).
\end{equation*}
Since $f(x)=x^{-p}$, where $p>0$, is convex on $(0, +\infty)$,  by \eqref{majorization}, we have
 \begin{eqnarray}\label{ineq30} \lambda(C_{1}^{-p}\oplus\cdots\oplus C_{k}^{-p})\prec_{w}\lambda(C^{-p}), \quad p>0.
 \end{eqnarray}
For $m$ positive numbers $a_1, \dots, a_m$,  their power mean  of order $r$ is defined by
$$M_{r}(a):= \left(\frac{1}{m}\sum_{i=1}^{m}a_i^{r}\right)^{\frac{1}{r}}$$
and it is known that $\lim_{r\to 0^+}M_{r}(a)= (\prod_{i=1}^{m}a_{i})^{\frac{1}{m}}$ (see \cite[p.15]{HLP}). Applying this to \eqref{ineq30} leads to
$$\lambda (C^{-1}_{1}\oplus\cdots\oplus C^{-1}_{k})\prec_{w \,\log}\lambda(C^{-1}),$$
i.e., \eqref{simple1}.
Thus, we complete the proof.
\end{proof}

 We now give an example to show that  \eqref{weak log} may not be true if $D\in \P_n$ is  not in diagonal block form, where
  $\Diag D = D_1\oplus \dots \oplus D_k$, and  $D_1\in \P_{n_1}, \dots, D_k \in \P_{n_k}$ with $n_1+\cdots + n_k=n$.
\begin{example}\label{WXZ4} Let $$C =
    \begin{pmatrix}
    14& 8& 9& 8\\
     8& 12& 7& 7\\
      9& 7& 10& 8\\
       8& 7& 8& 8
  \end{pmatrix},\quad
C_{1}= \begin{pmatrix}
14& 8\\
     8& 12
  \end{pmatrix},\quad
C_{2} =
    \begin{pmatrix}
       10& 8\\
      8& 8
  \end{pmatrix},$$
  $$
D=   \begin{pmatrix}
11& 12& 6& 11\\
 12& 16& 7& 12\\
  6& 7& 5& 6\\
   11& 12& 6& 14
  \end{pmatrix},
\quad D_{1} =
    \begin{pmatrix}
       11& 12\\
       12& 16
  \end{pmatrix},\quad
D_{2}=   \begin{pmatrix}
 5& 6\\
 6& 14
  \end{pmatrix}.
  $$
  By Matlab,
  \begin{eqnarray*}
  \lambda(C^{-1}D) &=&(4.8921,
    1.0664,
 0.3433,
    0.1772),\\
  \lambda(C_{1}^{-1}D_{1}) &=&(1.3488, 0.2281),\\
  \lambda(C_{2}^{-1}D_{2}) &= &(4.8080, 0.4420).
  \end{eqnarray*}
  So $\lambda(C_{1}^{-1}D_{1}\oplus C_{2}^{-1}D_{2})\not \prec_{w\,\log} \lambda(C^{-1}D)$ by considering the products of the first two largest entries of both sides.
\end{example}

\begin{remark}
One cannot replace the weak log majorization by the log majorization in \eqref{weak log}. For example let $C, D_{1}, D_{2}$ be given in  Example \ref{WXZ4}, and $D:=D_{1}\oplus D_{2}$.
Note that $$\lambda(C_{1}^{-1}D_{1}\oplus C_{2}^{-1}D_{2}) \not \prec_{\log} \lambda(C^{-1}D)$$ as $\det (C_1^{-1}D_1)\det (C_2^{-1} D_2)= 0.6538  \not= 2.1717=\det (C^{-1}D)$ by Matlab.
\end{remark}
\begin{remark}  Taking $D=I_{n}$ in Theorem \ref{main}, we easily obtain
\begin{equation}\label{rem}\prod_{i=m}^{n}\lambda_{i}(C)\leq\prod_{i=m}^{n}\lambda_{i}(C_{1}\oplus\cdots\oplus C_{k}),\,\,\,m=1,\ldots,n.
\end{equation}
\eqref{rem} implies Fischer inequality.

\end{remark}

\begin{remark} Let $C\in \P_n$ be given as in Theorem \ref{Matic} and $D\in \Cnn$ be positive semidefinite with $\Diag D =D_{1}\oplus\cdots\oplus D_{k}$. Wang, Xi, and Zhang \cite [Theorem 4] {WXZ} proved
$$\lambda(C_{1}^{-1}D_{1}\oplus 0)\le \lambda(C^{-1}D).$$
In Example \ref{WXZ4},  one can see that $(\lambda(C_{i}^{-1}D_{i}), 0, 0)\le  \lambda(C^{-1}D)$, $i=1, 2$.   Theorem \ref{main} complements this result when $D\in \P_n$ is in diagonal block form.
\end{remark}

Next we give an application of Theorem \ref{main} as follows.
\begin{theorem}\label{Zhang}\rm
Under the conditions as in Theorem \ref{Matic}, we have
\begin{equation} \label{det1}
\det (I_{n_1}+ (C_1^{-1}D_1)^p) \cdots  \det (I_{n_k}+ (C_k^{-1}D_k)^p)\leq \det (I_n+ (C^{-1}D)^p),\quad p\geq0.
\end{equation}
\end{theorem}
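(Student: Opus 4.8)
The plan is to derive \eqref{det1} directly from the weak log majorization \eqref{weak log} of Theorem \ref{main}. First I would recall the standard fact that for a positive definite (indeed hyperbolic) matrix $X\in\P_m$ one has $\det(I_m+X)=\prod_{i=1}^m(1+\lambda_i(X))$; this remains valid for $X=(C^{-1}D)^p$ since that matrix is hyperbolic with positive eigenvalues $\lambda_i(C^{-1}D)^p$, by the discussion in Section 2 on well-definedness of powers of products of positive definite matrices. Thus both sides of \eqref{det1} are products of terms of the form $1+t$ over the relevant eigenvalues, and the left-hand side is exactly $\prod_{i=1}^n\bigl(1+\lambda_i(C_1^{-1}D_1\oplus\cdots\oplus C_k^{-1}D_k)^p\bigr)$ while the right-hand side is $\prod_{i=1}^n\bigl(1+\lambda_i(C^{-1}D)^p\bigr)$.

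Next I would reduce to a purely vector statement: writing $x=\lambda(C_1^{-1}D_1\oplus\cdots\oplus C_k^{-1}D_k)\in\R_+^n$ and $y=\lambda(C^{-1}D)\in\R_+^n$, Theorem \ref{main} gives $x\prec_{w\,\log}y$. Since $t\mapsto t^p$ is monotone on $\R_+$ for $p\ge 0$, weak log majorization is preserved: $x^{\downarrow\,p}$ is still weakly log majorized by $y^{\downarrow\,p}$, i.e. $\prod_{i=1}^k x_{[i]}^p\le\prod_{i=1}^k y_{[i]}^p$ for every $k$ (when $p=0$ both sides are the all-ones vector and the claim is trivial). So it suffices to prove the following: if $a,b\in\R_+^n$ with $a\prec_{w\,\log}b$, then $\prod_{i=1}^n(1+a_{[i]})\le\prod_{i=1}^n(1+b_{[i]})$.

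This last implication is the heart of the matter, and it is a known consequence of weak log majorization: the function $t\mapsto\log(1+e^t)$ is increasing and convex on $\R$, so if $\log a\prec_w\log b$ (which is what $a\prec_{w\,\log}b$ says after taking logs) then $\sum_i\log(1+a_{[i]})\le\sum_i\log(1+b_{[i]})$ by the standard fact that an increasing convex function applied coordinatewise preserves weak majorization (the same fact already invoked in the proof of Theorem \ref{main}). Exponentiating gives the desired determinantal inequality. I would present this cleanly, perhaps stating the scalar lemma ``$a\prec_{w\,\log}b\ \Rightarrow\ \prod(1+a_i)\le\prod(1+b_i)$'' explicitly since it is exactly the bridge from \eqref{weak log} to \eqref{det1}.

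The main obstacle is essentially bookkeeping rather than a deep difficulty: one must be careful that the eigenvalues of $(C^{-1}D)^p$ really are the $p$-th powers of the eigenvalues of $C^{-1}D$ (legitimate because $C^{-1}D$ is hyperbolic, as established in Section 2, so its $p$-th power is defined by raising eigenvalues to the power $p$), and that $\det(I+X)=\prod(1+\lambda_i(X))$ applies to these non-Hermitian but diagonalizable-with-positive-spectrum matrices. Once these identifications are in place, the argument is the two-line chain: weak log majorization $\Rightarrow$ (via $t\mapsto t^p$ monotone) weak log majorization of $p$-th powers $\Rightarrow$ (via $t\mapsto\log(1+e^t)$ increasing convex) weak majorization of $\{\log(1+\lambda_i^p)\}$ $\Rightarrow$ the product inequality. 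I would also remark that the case $p=0$ is degenerate and handled separately in one line.
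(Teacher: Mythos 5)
Your proposal is correct and follows essentially the same route as the paper: the paper applies the increasing convex function $f(x)=\log(1+e^{px})$ directly to the weak majorization $\log\lambda(C_1^{-1}D_1\oplus\cdots\oplus C_k^{-1}D_k)\prec_w\log\lambda(C^{-1}D)$ from Theorem \ref{main} and exponentiates, which is exactly your two-step chain ($t\mapsto pt$ followed by $t\mapsto\log(1+e^t)$) composed into one function. Your added care about the hyperbolicity of $C^{-1}D$ and the identity $\det(I+X)=\prod(1+\lambda_i(X))$ matches the well-definedness discussion the paper gives at the start of Section 2.
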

\begin{proof}When $p=0$, it is trivial so we may assume $p>0$.
By Theorem \ref{main}, we have
\begin{equation}\label{2.7}
\log\lambda (C^{-1}_{1}D_{1}\oplus\cdots\oplus C^{-1}_{k}D_{k})\prec_{w}\log\lambda(C^{-1}D).
\end{equation}
Let us recall a known result  \cite[p.167]{MOA} for any increasing convex function  $f:\R\to \R$. If $x\prec_{w} y$, where $x=(x_{1}, x_{2},\ldots,x_{n}), y=(y_{1}, y_{2},\ldots, y_{n})\in \mathbb{R}^{n}$, then
\begin{equation*}
(f(x_{1}), f(x_{2}), \ldots, f(x_{n}))\prec_{w} (f(y_{1}), f(y_{2}), \ldots, f(y_{n})).
\end{equation*}
Since the function $f(x)=\log(1+e^{px})$, where $p>0$, is an increasing convex function,  applying the function to \eqref{2.7} gives
\begin{equation*}
\sum_{i=1}^{n}\log(1+(\lambda_{i}(C^{-1}_{1}D_{1}\oplus\cdots\oplus C^{-1}_{k}D_{k}))^{p})\leq \sum_{i=1}^{n}\log(1+(\lambda_{i}(C^{-1}D))^{p}),
\end{equation*}
i.e.,
\begin{equation*}
\sum_{i=1}^{n}\log(1+\lambda_{i}((C^{-1}_{1}D_{1})^{p}\oplus\cdots\oplus (C^{-1}_{k}D_{k})^{p}))\leq \sum_{i=1}^{n}\log(1+\lambda_{i}((C^{-1}D)^{p})).
\end{equation*}
The desired result follows by taking exponential on both sides.
\end{proof}
\begin{remark} When $p=1$, Theorem \ref{Zhang} reduces to Theorem \ref{Matic}.
\end{remark}

In the next example we show that \eqref{det1} is not true when $p<0$.
\begin{example}\label{counter1}
Let $n=2$, $n_1=n_2=1$, $D=I_2$,
    $$C =
    \begin{pmatrix}
    3   &2 \\2&3
  \end{pmatrix}\in \P_{2\times 2},\quad
C_1=   C_2= 3.
  $$
  Direct computation gives $\lambda(C) = \{5,1\}$.
  Let $p<0$ and set $q:=-p$ so $q>0$. Then
  $$
 \lambda (C^q) = \{5^q, 1\}, \quad  C_1^q=C_2^q=3^q.
  $$
Let
 $$ f(q):=\det (I_2+C^q)=(1+5^q)(1+1)  = 2+2\cdot 5^q$$ and $$g(q):=\det (1+C_1^q) \det (1+C_2^q) = (1+3^q)^2 = 1+2\cdot3^q+3^{2q}.$$
 We are going to show that $g(q)>f(q)$ for all $q>0$. Let $f(x)=2\cdot 3^{x}+3^{2x}-2\cdot 5^{x}-1$. Since
 $$
 f'(x)=(3^{x}+9^{x})\ln9-2\cdot5^{x}\ln5\geq 2 \sqrt{3^{x}\cdot 9^{x}}\ln9-2\cdot 5^{x}\ln5>0,\quad \mbox{for } x>0,
 $$
 we have $f(x)>f(0)=0$ when $x>0$.
 Thus \eqref{det1} is not true when $p<0$.
\end{example}

We would like to point out that \eqref{det1} is no longer true if $D\in \P_n$ is  not in diagonal block form. We give an example to show this as follows.
  \begin{example}
Let $p=1$, $n=2$, $n_1=n_2=1$,
 $$C =
    \begin{pmatrix}
    12& 7\\
      7& 10
  \end{pmatrix},\quad
C_{1}=12,\quad
C_{2} =10,$$
  $$
D=   \begin{pmatrix}
 16& 7\\
7& 5
  \end{pmatrix},
\quad
D_{1} = 16,\quad
D_{2}=  5.
  $$
Direct computation gives
$$\det(I_2+C^{-1}D)\approx3.1549< 3.5=\det(1+C_{1}^{-1}D_{1})\det(1+C_{2}^{-1}D_{2}).$$
\end{example}

Next we will show that  inequalities  \eqref{false1} and \eqref{false2} are not true and
we first give a  counterexample to the following inequality.
\begin{eqnarray}\label{counter}  \det (D_1^{-2}+C_1^{-2}) \cdots  \det (D_k^{-2}+C_k^{-2})\leq \det (D^{-2} +C^{-2}),
\end{eqnarray}
where $C, D, C_{i}, D_{i}, i=1,\ldots, k$, are given as in Theorem \ref{Matic}.
\begin{example}\label{example}
Let  $$C =
    \begin{pmatrix}
    16.25& 21& 10& 12.5\\
     21& 39.75& 20.75& 28.5\\
      10& 20.75& 22.5& 27.75\\
       12.5& 28.5& 27.75& 39.25
  \end{pmatrix},\quad
C_{1}= \begin{pmatrix}
16.25& 21\\
     21& 39.75
  \end{pmatrix},
  $$
 $$C_{2} =
    \begin{pmatrix}
       22.5& 27.75\\
      27.75& 39.25
  \end{pmatrix},\quad
D=   \begin{pmatrix}
14.7& 15& 0& 0\\
 15& 15.8& 0& 0\\
  0& 0& 0.25& 0.4\\
   0& 0& 0.4& 0.8
  \end{pmatrix},
  $$
and
$$D_{1} =
    \begin{pmatrix}
       14.7& 15\\
       15& 15.8
  \end{pmatrix},\quad
D_{2}=   \begin{pmatrix}
 0.25& 0.4\\
 0.4& 0.8
  \end{pmatrix}.
  $$
By Matlab
$$\det (D^{-2} +C^{-2})=51.0669<54.6523 =\det (D_1^{-2}+C_1^{-2})\det (D_2^{-2}+C_2^{-2}).$$
Therefore, \eqref{counter} is false.
\end{example}

Note that
\begin{eqnarray*}\det (I_n+ |C^{-1}D|^2)&= &\det (I_n+ DC^{-2}D)=\det(D(D^{-2}+C^{-2})D)\\
&=&\det(D^{-2}+C^{-2})\cdot (\det D)^2,
\end{eqnarray*}
and

$$\det (I_{n_1}+ |C_1^{-1}D_1|^2) \cdots  \det (I_{n_k}+ |C_k^{-1}D_k|^2)$$
\begin{eqnarray*}
&=&\det(D_{1}(D_{1}^{-2}+C^{-2})D_{1})\cdots \det(D_{k}(D_{k}^{-2}+C_{k}^{-2})D_{k})\\
&=&\det(D_{1}^{-2}+C_{1}^{-2})\cdot\det(D_{1})^{2}\cdots\det(D_{k}^{-2}+C_{k}^{-2})\cdot (\det D_{k})^2\\
&=&\det(D_{1}^{-2}+C_{1}^{-2})\cdots\det(D_{k}^{-2}+C_{k}^{-2})\cdot  (\det D)^2.
\end{eqnarray*}
Since  \eqref{counter} is false,  \eqref{false1} is invalid.

Note that  $$\det (I_n+ C^{-2}D^2)=\det((D^{-2}+C^{-2})D^{2})=\det(D^{-2}+C^{-2})\cdot (\det D)^2$$
and
$$\det(I_{n_1}+ C_1^{-2}D_1^2) \cdots  \det (I_{n_k}+ C_k^{-2}D_k^2)$$
\begin{eqnarray*}&=&\det(D_{1}^{-2}+C_{1}^{-2})\cdot\det(D_{1})^{2} \cdots  \det(D_{k}^{-2}+C_{k}^{-2})\cdot (\det D_{k})^2\\
&=&\det(D_{1}^{-2}+C_{1}^{-2}) \cdots  \det(D_{k}^{-2}+C_{k}^{-2})\cdot (\det D)^2.
\end{eqnarray*}
Since  \eqref{counter} is false,  \eqref{false2} is also invalid.

\begin{remark}
Since \eqref{false1} is not true, the vectors of eigenvalues cannot be replaced by the vectors of singular values in \eqref{weak log} from the proof in Theorem \ref{Zhang}. In other words,
\begin{equation*}
s(C^{-1}_1D_1\oplus \cdots \oplus C^{-1}_kD_k)\not\prec_{w \,\log}
s(C^{-1}D),
\end{equation*}
where $C, D, C_{i}, D_{i}, i=1,\ldots, k$, are given as in Theorem \ref{Matic}.
\end{remark}

\section{Weak  majorization complementary to Choi's determinantal inequality }
Let us first recall a result from \cite[Theorem 7.13]{Zhang}.
\begin{lemma}\label{lem}\rm Let $A\in \C_{n\times n}$ be positive semidefinite and let $[A]$ be the principal submatrix of $A$ corresponding to some fixed rows and columns. Assuming that the inverses involved exist, we have
  $$[A]^{-1}\leq[A^{-1}].$$
\end{lemma}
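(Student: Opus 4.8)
The plan is to reduce to the case where the distinguished rows and columns are the leading ones, and then use the classical Schur complement formula for the leading block of an inverse. First observe that, since $A^{-1}$ is assumed to exist and $A$ is positive semidefinite, $A$ is in fact positive definite; consequently every principal submatrix of $A$ is positive definite, which is consistent with (indeed forces) the invertibility of $[A]$. Conjugating $A$ by a suitable permutation matrix moves the distinguished principal submatrix into the leading position and simultaneously moves the corresponding principal block of $A^{-1}$ into the leading position of the conjugated inverse, and the asserted inequality is invariant under this relabeling; so without loss of generality we may write
\begin{equation*}
A = \begin{pmatrix} A_{11} & A_{12} \\ A_{12}^{*} & A_{22} \end{pmatrix}, \qquad [A] = A_{11},
\end{equation*}
with $A_{11}$ and $A_{22}$ positive definite.

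The key step is the block-inversion identity: the leading principal block of $A^{-1}$ of the same size as $A_{11}$ equals the inverse of the Schur complement of $A_{22}$, that is,
\begin{equation*}
[A^{-1}] = \bigl( A_{11} - A_{12} A_{22}^{-1} A_{12}^{*} \bigr)^{-1}.
\end{equation*}
Since $A_{22}^{-1}$ is positive definite, the matrix $S := A_{12} A_{22}^{-1} A_{12}^{*}$ is positive semidefinite, hence $0 < A_{11} - S \le A_{11}$. Applying the fact that $X \mapsto X^{-1}$ reverses the L\"owner order on the positive definite cone (if $0 < X \le Y$ then $Y^{-1} \le X^{-1}$, which follows from $X^{-1/2} Y X^{-1/2} \ge I$, hence $X^{1/2} Y^{-1} X^{1/2} \le I$) to $0 < A_{11} - S \le A_{11}$ gives
\begin{equation*}
[A^{-1}] = (A_{11} - S)^{-1} \ge A_{11}^{-1} = [A]^{-1},
\end{equation*}
which is the desired inequality.

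I do not expect a genuine obstacle here: the argument is routine once the reduction is set up. The only points that deserve a line of care are verifying that the hypotheses really force $A$, and therefore $A_{22}$, to be positive definite (so that the Schur complement and all the inverses in play are legitimate), and citing — or quickly proving — the order-reversing property of matrix inversion, which is a standard fact of matrix analysis.
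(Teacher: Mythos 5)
Your proof is correct. The paper does not actually prove this lemma---it simply cites it as Theorem 7.13 of Zhang's \emph{Matrix Theory}---and your Schur-complement argument (permute the distinguished block to the leading position, use $[A^{-1}]=(A_{11}-A_{12}A_{22}^{-1}A_{12}^{*})^{-1}$, note that subtracting the positive semidefinite term $A_{12}A_{22}^{-1}A_{12}^{*}$ can only decrease $A_{11}$ in the L\"owner order, and invoke the order-reversing property of inversion) is precisely the standard textbook proof of that cited result, with the hypotheses (invertibility forcing positive definiteness of $A$ and of all its principal submatrices) handled correctly.
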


\begin{theorem}\rm\label{thm3.2}  Under the conditions as in Theorem \ref{Cthm}, we have
\begin{eqnarray}\label{3.0}\lambda\left(\sum_{i=1}^{m}(A_{i}^{(1)})^{-1}\oplus\cdots\oplus\sum_{i=1}^{m}(A_{i}^{(k)})^{-1}\right)^{p}&\prec_{w}&\lambda\left(\sum_{i=1}^{m}A_{i}^{-1}\right)^{p},\quad p\geq1.
\end{eqnarray}
\end{theorem}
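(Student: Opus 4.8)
The plan is to reduce \eqref{3.0} to a statement about eigenvalue majorization that follows from Lemma \ref{lem} together with the convexity/monotonicity transfer principles for (weak) majorization already invoked in Section 2. First I would set $B^{(j)} := \sum_{i=1}^m (A_i^{(j)})^{-1} \in \P_{n_j}$ and $B := \sum_{i=1}^m A_i^{-1} \in \P_n$, and observe that $\Diag B = [B]_1 \oplus \cdots \oplus [B]_k$ where $[B]_j \in \P_{n_j}$ is the $j$-th diagonal block of $B$. By Lemma \ref{lem} applied to each $A_i$ and the block of indices defining the $j$-th component, $(A_i^{(j)})^{-1} \le [A_i^{-1}]_j$; summing over $i$ gives $B^{(j)} \le [B]_j$ for each $j$, hence $B^{(1)} \oplus \cdots \oplus B^{(k)} \le [B]_1 \oplus \cdots \oplus [B]_k = \Diag B$ in the positive semidefinite order. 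Consequently $\lambda_i\bigl(B^{(1)} \oplus \cdots \oplus B^{(k)}\bigr) \le \lambda_i(\Diag B)$ for every $i$ (in the sense of the ordered eigenvalue vectors), i.e. $\lambda\bigl(B^{(1)} \oplus \cdots \oplus B^{(k)}\bigr) \le \lambda(\Diag B)$ entrywise after rearrangement.

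Next I would bring in the majorization $\lambda(\Diag B) \prec \lambda(B)$, which is exactly \eqref{majorization} (Ky Fan, \cite[p.308]{MOA}) applied to $B$ in place of $C$. Combining the entrywise inequality from the previous paragraph with this majorization, and using the elementary fact that $x \le y$ (entrywise, ordered) together with $y \prec z$ implies $x \prec_w z$, we obtain
\begin{equation*}
\lambda\bigl(B^{(1)} \oplus \cdots \oplus B^{(k)}\bigr) \prec_w \lambda(B).
\end{equation*}
That is, \eqref{3.0} holds for $p = 1$. To promote this to $p \ge 1$, I would apply the transfer principle for increasing convex functions under weak majorization, \cite[p.167]{MOA}, exactly as in the proof of Theorem \ref{Zhang}: since $t \mapsto t^p$ is increasing and convex on $[0, \infty)$ for $p \ge 1$, weak majorization of the (nonnegative) eigenvalue vectors is preserved, giving $\lambda(B^{(1)} \oplus \cdots \oplus B^{(k)})^p \prec_w \lambda(B)^p$, which is \eqref{3.0}.

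The main obstacle, and the only place requiring care, is the first step: justifying $B^{(j)} \le [B]_j$ and then passing from a Loewner-order inequality between the direct sum and $\Diag B$ to an entrywise eigenvalue inequality. The Loewner inequality is immediate from Lemma \ref{lem} and additivity of $\le$, and the eigenvalue consequence is the standard monotonicity of eigenvalues under the positive semidefinite order (Weyl); I would state this explicitly since it is the linchpin. One should also note why $p \ge 1$ is the natural range here rather than $p \ge 0$: for $0 < p < 1$ the map $t \mapsto t^p$ is concave, so the weak-majorization transfer fails in general, and indeed \eqref{3.0} would require a separate argument (or may simply be false) in that regime — but for the stated theorem only $p \ge 1$ is claimed, so no further work is needed. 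Everything else is bookkeeping already modeled on the earlier proofs in the paper.
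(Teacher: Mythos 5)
Your proposal is correct and follows essentially the same route as the paper's own proof: Lemma \ref{lem} plus Weyl's monotonicity to compare $\bigoplus_j\sum_i(A_i^{(j)})^{-1}$ with $\Diag\bigl(\sum_i A_i^{-1}\bigr)$ entrywise in the ordered eigenvalues, Ky Fan's result \eqref{majorization} to majorize the latter by $\lambda\bigl(\sum_i A_i^{-1}\bigr)$, and the increasing-convex transfer for $t\mapsto t^p$, $p\ge 1$. No gaps.
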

\begin{proof} By \eqref{majorization}, we have
\begin{eqnarray} \label{3.1}
\lambda\left(\sum_{i=1}^{m}(A_{i}^{-1})^{(1)}\oplus\cdots\oplus\sum_{i=1}^{m}(A_{i}^{-1})^{(k)}\right)\prec\lambda\left(\sum_{i=1}^{m}A_{i}^{-1}\right).
\end{eqnarray}
By Lemma \ref{lem} and Weyl's monotonicity theorem \cite[p.63]{B}, we have
\begin{eqnarray} \label{3.2}
\lambda\left(\sum_{i=1}^{m}(A_{i}^{(1)})^{-1}\oplus\cdots\oplus\sum_{i=1}^{m}(A_{i}^{(k)})^{-1}\right)\leq\lambda\left(\sum_{i=1}^{m}(A_{i}^{-1})^{(1)}\oplus\cdots\oplus\sum_{i=1}^{m}(A_{i}^{-1})^{(k)}\right).
\end{eqnarray}
Now \eqref{3.1} and \eqref{3.2} lead to
\begin{eqnarray*}
\lambda\left(\sum_{i=1}^{m}(A_{i}^{(1)})^{-1}\oplus\cdots\oplus\sum_{i=1}^{m}(A_{i}^{(k)})^{-1}\right)\prec_{w}\lambda\left(\sum_{i=1}^{m}A_{i}^{-1}\right).
\end{eqnarray*}
The desired result follows by the fact that  $f(x)=x^{p},\,p>1,$ is increasing and convex on $(0, +\infty)$.
\end{proof}
\begin{remark} Choi's determinantal inequality \eqref{Choi} is equivalent to the following:
$$\det\left(\sum_{i=1}^{m}(A_{i}^{(1)})^{-1}\oplus\cdots\oplus\sum_{i=1}^{m}(A_{i}^{(k)})^{-1}\right)^{p}\leq\det\left(\sum_{i=1}^{m}A_{i}^{-1}\right)^{p},\quad p>0.$$
Thus, Theorem \ref{thm3.2} is a complement to Theorem \ref{Cthm}.
\end{remark}

It is natural to ask whether \eqref{3.0} holds for $0<p<1$? By Theorem \ref{thm3.2} and the proof of Theorem \ref{main}, this question is equivalent to the following weak log majorization question:
\begin{question}Under the conditions as in Theorem \ref{Cthm},
\begin{equation}\label{3.5}
\lambda\left(\sum_{i=1}^{m}(A_{i}^{(1)})^{-1}\oplus\cdots\oplus\sum_{i=1}^{m}(A_{i}^{(k)})^{-1}\right)\prec_{w \log}\lambda\left(\sum_{i=1}^{m}A_{i}^{-1}\right)?
\end{equation}
\end{question}
When $n=2, k=2, n_{1}=n_{2}=1$, \eqref{3.5} holds by \eqref{Choi} and \eqref{3.0}. The other cases are open.
We would like to point out that we performed computer experiments and the outcomes are consistent with the weak log majorization given in \eqref{3.5}.

\section*{Disclosure statement}
No potential conflict of interest was reported by the authors.

\section*{Acknowlegements}
The second author is supported by the National Natural Science Foundation of China (Grant numbers: 11601054, 11671060), the  Basic and Advanced Research Project of CQ CSTC (Grant number: cstc2016jcyjA0466), the Science and Technology Research Project of Chongqing Municipal Education Commission (Grant number: KJ201600644621) and the Youth Science Research Project of Chongqing University of Posts and Telecommunications (Grant number: A2014-104).

\end{document}